\newtheorem{theorem}{Theorem}%[section]
\newtheorem{lemma}[theorem]{Lemma}
\newtheorem{cor}[theorem]{Corollary}
\theoremstyle{definition}
\newtheorem{definition}[theorem]{Definition}
\theoremstyle{remark}
\numberwithin{equation}{section} 
\newcommand{\ncmnd}{\newcommand}
\ncmnd{\nthm}{\newtheorem}
\theoremstyle{definition}
\theoremstyle{remark}
\newcommand{\catzero}{{\mbox{\sc cat($0$)}}}
\newcommand{\catk}{{\mbox{\sc cat($K$)}}}
\newcommand{\catone}{{\mbox{\sc cat($1$)}}}
\begin{document}

\title[Ishikawa iteration process on \catk\ spaces]{Ishikawa iteration process in \catk\ spaces}
\author{C. Jun}
\address{Department of Mathematics,
        University of Pennsylvania, Philadelphia PA, 19104}
\email{cjun@math.upenn.edu}

\begin{abstract}
In this paper, we establish $\Delta$-convergence results for Ishikawa iterations in complete \catk\ spaces.
\end{abstract}

\subjclass{47H09, 53C20}

\keywords{Ishikawa iteration, \catk\ geometry, Nonexpansive mapping}

\maketitle

\section{Introduction}
Let $(X,d)$ be a complete metric space and let $T$ be a mapping from $X$ to $X$. Then $T$ is called \emph{nonexpansive} if for all $x,y \in X$,
$$d(T(x), T(y) ) \leq d(x,y).$$ A point $x \in X$ is called a \emph{fixed point} of $T$ if $T(x) = x$. $Fix(T)$ denotes the set of fixed points of $T$. Kirk proved the existence of fixed points for nonexpansive mappings on \catzero\ spaces in \cite{k} and \catk\ spaces in \cite{k2}.

A \catk\ space is a metric space in which no triangle is fatter than the triangle with the same edge lengths in a model space, which is the 2-dimensional, complete, simply-connected space of constant curvature $K$ (see Definition \ref{de3}). A \catk\ space is a generalization of a simply-connected Riemannian manifold with sectional curvature $\leq K$; we will introduce generalized definitions of convergence and notations including the sum $\oplus$, which interpolates between a pair of points along a geodesic, in Section 2.

In a \catk\ space $X$, for $t_n, s_n \in [0,1]$ and $x_0 \in X$, the Ishikawa iteration $\{ x_n \}$ is defined by
\begin{equation}\label{e3}
x_{n+1} = t_n T (y_n) \oplus (1 - t_n) x_n , \;\; n \geq 0,
\end{equation}
where
$y_n = s_n T(x_n) \oplus (1-s_n) x_n$.

For a nonexpansive mapping $T$,
Dhompongsa and Panyanak in \cite{dp} obtained a $\Delta$-convergence result for Ishikawa iterations in complete \catzero\ spaces under the conditions $$\sum^{\infty}_{n=0} t_n(1-t_n) = \infty, \sum^{\infty}_{n=0} (1-t_n) s_n < \infty \;\;\textit{and}\;\; \limsup_{n} s_n < 1.$$
In \cite{pl}, a similar result was proved by Panyanak and Laokul under the other conditions $$\sum^{\infty}_{n=0} t_n(1-t_n) = \infty \;\; \textit{and} \;\; \sum^{\infty}_{n=0} t_n(1-t_n) s_n < \infty.$$
In this paper, we will obtain $\Delta$-convergence results for Ishikawa iterations in complete \catk\ spaces (see Theorem \ref{t1} and Theorem \ref{t2}).

In \cite{h}, He, Fang, L$\acute{\mathrm{o}}$pez and Li studied the $\Delta$-convergence of Mann iterations in complete \catk\ spaces with the condition $$\sum^{\infty}_{n=0} t_n(1-t_n) = \infty.$$  Since the Mann iteration is given by \eqref{e3} when $s_n =0$ for all $n$, we provide an alternative proof of the $\Delta$-convergence theorem for Mann iterations in complete \catk\ spaces.
\section{Preliminaries}
Let $(X,d)$ be a metric space. The open ball centered at $p$ with radius $r$ is denoted by $B_r(p)$. The closed ball centered at $p$ with radius $r$ is denoted by $B_r[p]$.

A curve $\gamma: I \to X$ is called a \emph{geodesic} if for any two $t, t^\prime \in I$, $d(\gamma(t),\gamma(t^\prime))=|t-t^\prime|$.
We denote by $[xy]$, a unit-speed geodesic $\gamma: I \to X$ from $x$ to $y$ defined on $I=[0,t]$, where $\gamma(0)=x$, $\gamma(t)=y$ and $t=d(x,y)$.
By $\triangle xyz$, we denote the geodesic triangle of geodesics $[x y]$,$[x z]$ and $[y z]$.

Let $C$ be a positive constant. A metric space $X$ is a \emph{geodesic space} if any two points are joined by a geodesic; and a \emph{C-geodesic space} if any two points with distance $< C$ are joined by a geodesic.
A set $Y \subset X$ is \emph{convex} if any two points $x,y \in Y$ can be joined by a geodesic and all geodesics joining them are contained in $Y$. If this condition holds for any two points in $Y$ with distance $<C$, $Y$ is said to be \emph{C-convex}.

For a constant $K$, we use $M_K$ to denote the 2-dimensional, complete, simply-connected space of constant curvature $K$.
Then $M_0 = \mathbb{E}^2$, $M_1 = \mathbb{S}^2$ and $M_{-1} = \mathbb{H}^2$.
Let $d_K$ be the metric of $M_K$.
$D_K$ denotes the diameter of $M_K$. Thus, $D_K = {\pi \over \sqrt{K} }$ if $K > 0$ and $D_K = \infty$ if $K \leq 0$.

A triangle $\triangle \widetilde{x}_1\widetilde{x}_2\widetilde{x}_3$ in $M_K$ is called a \emph{comparison triangle} for $\triangle x_1x_2x_3$ in $X$ if $d_K(\widetilde{x}_i,\widetilde{x}_j)=d(x_i,x_j)$ for $i,j \in \{1,2,3\}$.

%We write $$\widetilde{\triangle} x_1 x_2 x_3 = \triangle \widetilde{x}_1\widetilde{x}_2\widetilde{x}_3.$$

\begin{definition}\label{de3}
Let $(X,d)$ be a metric space and let $K$ be a real constant. A $D_K$-geodesic space $X$ is a \emph{\catk\ space} if for any geodesic triangle $\triangle xy_1y_2$ of perimeter $< 2 D_K$, and its comparison triangle $\triangle \widetilde{x}\widetilde{y}_1\widetilde{y}_2$ in $M_K$, we have $$d(z_1,z_2) \leq d_K(\widetilde{z}_1,\widetilde{z}_2),$$ where $z_i$ is any point on $[x y_i]$ and $\widetilde{z}_i$ is the point on $[\widetilde{x} \widetilde{y}_i]$ such that $d_K(\widetilde{x},\widetilde{z}_i)=d(x,z_i)$ for $i \in \{1,2\}$.
\end{definition}

We now record a few lemmas about \catk\ spaces that we will need in the sequel.
\begin{lemma}\cite[Page 160]{bh}\label{l456}
Let $X$ be a \catk\ space. \\
$(1)$ For any two points $x,y$ in $X$ with distance less than $D_K$, there is a unique geodesic $[x y]$ connecting them. \\
$(2)$ Any ball in $X$ with radius less than $D_K/2$ is convex.
\end{lemma}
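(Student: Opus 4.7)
The plan is to prove parts (1) and (2) by applying the \catk\ comparison inequality to carefully chosen triangles, reducing both claims to facts about the model space $M_K$.

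For part (1), the existence of a geodesic between points at distance $< D_K$ is built into the hypothesis that $X$ is a $D_K$-geodesic space. For uniqueness, suppose $\gamma$ and $\gamma'$ are two unit-speed geodesics from $x$ to $y$, and form the (possibly degenerate) geodesic triangle $\triangle x y y$ whose sides are $\gamma$, $\gamma'$, and the constant geodesic at $y$. Its perimeter is $2d(x,y) < 2D_K$, so Definition~\ref{de3} applies. The comparison triangle in $M_K$ collapses to a single segment of length $d(x,y) < D_K$, and the comparison point for $\gamma(t)$ on one side coincides with the comparison point for $\gamma'(t)$ on the other, namely the unique point at distance $t$ from $\widetilde x$ on $[\widetilde x\widetilde y]$. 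The \catk\ inequality then forces $d(\gamma(t),\gamma'(t)) \le 0$ for every $t$, so $\gamma = \gamma'$.

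For part (2), let $B = B_r[p]$ with $r < D_K/2$ and pick $x,y \in B$. Then $d(x,y) \le d(x,p) + d(p,y) \le 2r < D_K$, so by (1) there is a unique geodesic $[xy]$. To show $[xy]\subset B$, I would form the triangle $\triangle p x y$, which has perimeter at most $4r < 2D_K$. The comparison inequality then yields $d(p,z) \le d_K(\widetilde p,\widetilde z)$ for every $z \in [xy]$ with comparison point $\widetilde z \in [\widetilde x\widetilde y]$. The task therefore reduces to the model-space assertion that the closed ball of radius $r$ about $\widetilde p$ in $M_K$ is convex whenever $r < D_K/2$. For $K \le 0$ this is immediate from the usual convexity of metric balls in $\mathbb{E}^2$ and $\mathbb{H}^2$, and for $K > 0$ the bound $r < \pi/(2\sqrt{K})$ places the ball in an open hemisphere of $M_K$, where an elementary spherical-trigonometry argument shows that every geodesic segment between two points of the cap stays in the cap.

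I expect the only real obstacle to be the convexity statement in the model space for $K > 0$, since the threshold $D_K/2$ is sharp: balls of radius $> \pi/(2\sqrt{K})$ on $M_K$ are not convex, as nearly antipodal pairs inside such a ball can be joined by geodesics that escape it. Verifying convexity at the threshold amounts to a direct computation with the spherical law of cosines, showing that the distance from $\widetilde p$ to a point $\widetilde z(s)$ moving along $[\widetilde x\widetilde y]$ is maximized at an endpoint. Once this model-space fact is in hand, both (1) and (2) follow cleanly from the definition of \catk.
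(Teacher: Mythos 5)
The paper does not prove this lemma; it is quoted directly from Bridson--Haefliger (page 160), so there is no in-paper argument to compare against. Your proposal is correct and is essentially the standard proof from that source: uniqueness via the comparison inequality applied to the degenerate triangle $\triangle xyy$ (forcing $d(\gamma(t),\gamma'(t))\le 0$), and convexity of balls by reducing, via a comparison triangle $\triangle pxy$ of perimeter $<2D_K$, to convexity of balls of radius $<D_K/2$ in $M_K$, which for $K>0$ follows from the fact that $t\mapsto \cos\bigl(\sqrt{K}\,d_K(\widetilde p,\widetilde z(t))\bigr)$ is a concave (indeed sinusoidal) function along a geodesic inside an open hemisphere and so attains its minimum at an endpoint.
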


\begin{lemma}\cite[Page 178]{bh}\label{l123}
Let $(X,d)$ be a \catk\ space and let $F$ be a closed and $D_K$-convex subset of $X$. Then for each point $x \in X$ such that $d(x,F) < D_K/2$, there is a unique point $y \in F$ such that $d(x,y) = d(x,F)$.
\end{lemma}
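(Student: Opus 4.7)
The plan is to construct the nearest point by a minimizing sequence argument and then verify uniqueness, with both steps resting on a strict (uniform) convexity property of metric balls of radius $<D_K/2$ in the model space $M_K$. Set $r := d(x,F) < D_K/2$ and choose $y_n \in F$ with $d(x,y_n) \to r$; one may assume $d(x,y_n) < r+\varepsilon_n$ for a sequence $\varepsilon_n \downarrow 0$ small enough that $r+\varepsilon_n < D_K/2$. First I would verify that $\{y_n\}$ is Cauchy. Since $d(y_n,y_m) \le d(x,y_n)+d(x,y_m) < 2(r+\varepsilon_n) < D_K$, the geodesic $[y_n y_m]$ exists by Lemma \ref{l456}(1) and lies in $F$ by $D_K$-convexity, so its midpoint $m_{n,m}$ belongs to $F$ and hence $d(x,m_{n,m}) \ge r$.

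The key ingredient is the following fact in $M_K$: there is a continuous function $\psi_K(r,\cdot)$, strictly increasing and vanishing at $0$, such that whenever $\widetilde x,\widetilde y_1,\widetilde y_2 \in M_K$ satisfy $d_K(\widetilde x,\widetilde y_i)\le r<D_K/2$ and $\widetilde m$ is the midpoint of $[\widetilde y_1\widetilde y_2]$, one has $d_K(\widetilde x,\widetilde m) \le \sqrt{r^2 - \psi_K(r,d_K(\widetilde y_1,\widetilde y_2))}$. For $K=0$ this is the parallelogram law with $\psi_0(r,t)=t^2/4$; for $K\ne 0$ it follows from a direct trigonometric computation on the comparison triangle (law of cosines in $M_K$), using that balls of radius $<D_K/2$ are strictly convex. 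Applying this in the comparison triangle $\triangle\widetilde x \widetilde y_n \widetilde y_m$ of $\triangle x y_n y_m$ and invoking the \catk\ inequality at the midpoints gives
\[
r \le d(x,m_{n,m}) \le d_K(\widetilde x,\widetilde m_{n,m}) \le \sqrt{(r+\varepsilon_n)^2 - \psi_K\bigl(r+\varepsilon_n,\,d(y_n,y_m)\bigr)}.
\]
Squaring and rearranging yields $\psi_K(r+\varepsilon_n,d(y_n,y_m)) \le (r+\varepsilon_n)^2 - r^2 \to 0$, whence $d(y_n,y_m)\to 0$ by the monotonicity of $\psi_K$. Closedness of $F$ then gives a limit $y\in F$ with $d(x,y)=r$. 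Uniqueness follows by the same estimate applied to any two minimizers $y,y'$: taking $\varepsilon_n=0$ forces $\psi_K(r,d(y,y'))\le 0$, hence $y=y'$.

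The main obstacle is verifying the model-space inequality in the $K\ne 0$ cases with the correct range $r<D_K/2$, since this is precisely the threshold beyond which strict convexity of balls (and hence of the distance-to-$F$ function along geodesics) can fail; the perimeter bound $2D_K$ in the definition of \catk\ and the diameter bound $D_K/2$ on $r$ are what make the comparison argument go through. Once this model-space fact is established, the transfer to $X$ is a routine application of the comparison condition and of Lemma \ref{l456}.
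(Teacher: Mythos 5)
The paper offers no proof of this lemma --- it is quoted verbatim from Bridson--Haefliger \cite[p.~178]{bh} --- and your argument (minimizing sequence in $F$, midpoints forced back into $F$ by $D_K$-convexity, Cauchyness and uniqueness extracted from a uniform-convexity modulus of the distance in $M_K$ transferred to $X$ by the comparison inequality) is precisely the standard proof given in that reference, so the approach matches and is correct. The one point to watch is that convergence of your Cauchy sequence requires completeness of $F$ (e.g.\ $F$ closed in a \emph{complete} $X$), a hypothesis the lemma as stated omits but which is present wherever the paper invokes it.
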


Let $(X,d)$ be a \catk\ space and let $x, y \in X$ such that $d(x,y) < D_K$. Then
 $ t x \oplus (1-t) y $ denotes the unique point on $[x y]$ for $t \in [0,1]$ such that $d( x , tx \oplus (1-t) y ) = (1-t) d(x,y) $ and $ d( y , tx \oplus (1-t) y ) = t d(x,y) $.

\begin{lemma}\cite[Lemma 3.3]{p}\label{l}
For a positive number $C \leq \pi/2$, let $(X,d)$ be a \catone\ space and let $p, x, y \in X$ such that $d(p,x) \leq C$, $d(p,y) \leq C$ and $d(x,y) \leq C$.
Then for any $t \in [0,1]$, $$d( (1-t) p \oplus t x , (1-t) p \oplus t y ) \leq  {\sin t C  \over \sin C } d(x,y).$$
\end{lemma}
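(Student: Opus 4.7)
The plan is to push the problem into the model sphere $M_1 = \mathbb{S}^2$ via the \catone\ comparison and then verify the stated bound by direct spherical trigonometry.

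Since each of $d(p,x)$, $d(p,y)$, $d(x,y)$ is at most $C \leq \pi/2$, the geodesic triangle $\triangle pxy$ has perimeter at most $3C \leq 3\pi/2 < 2D_1$, so by Definition \ref{de3} there is a comparison triangle $\triangle \tilde p \tilde x \tilde y$ in $\mathbb{S}^2$. Let $\tilde z_1,\tilde z_2$ be the comparison points of $z_1 := (1-t)p \oplus tx$ and $z_2 := (1-t)p \oplus ty$, i.e.\ the points on $[\tilde p \tilde x]$ and $[\tilde p \tilde y]$ at distances $td(p,x)$ and $td(p,y)$ from $\tilde p$. The \catone\ inequality then yields $d(z_1, z_2) \leq d_1(\tilde z_1, \tilde z_2)$, so it suffices to prove the bound in $\mathbb{S}^2$.

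Write $a = d_1(\tilde p, \tilde x)$, $b = d_1(\tilde p, \tilde y)$, $c = d_1(\tilde x, \tilde y)$, and let $\alpha$ be the angle of $\triangle \tilde p \tilde x \tilde y$ at $\tilde p$. Applying the spherical law of cosines to $\triangle \tilde p \tilde z_1 \tilde z_2$ (whose sides at $\tilde p$ are $ta$ and $tb$, with the same angle $\alpha$) and to $\triangle \tilde p \tilde x \tilde y$, then expanding with the product-to-sum identities together with $\cos\alpha = 1 - 2\sin^2(\alpha/2)$, yields
$$ \sin^2\!\bigl(d_1(\tilde z_1, \tilde z_2)/2\bigr) = \sin^2\!\bigl(t(a-b)/2\bigr)\cos^2(\alpha/2) + \sin^2\!\bigl(t(a+b)/2\bigr)\sin^2(\alpha/2), $$
and the same identity with $t=1$ computes $\sin^2(c/2)$.

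The crux is the trigonometric inequality $\sin(tu)\sin C \leq \sin(tC)\sin u$ for all $u \in [0,C]$ and $t \in [0,1]$. I would establish it via the substitutions $P := (C-u)/2$, $Q := (C+u)/2$ and sum-to-product, reducing it to monotonicity of $x \mapsto \sin((1+t)x)/\sin((1-t)x)$ on $(0,\pi/(1+t))$, which in turn follows from the fact that $s \mapsto s\cot(sx)$ is non-increasing on $(0,\pi/x)$ --- immediate from $\sin 2y \leq 2y$. Applying this inequality with $u = |a-b|/2$ and $u = (a+b)/2$, both at most $C$, to the identity above gives $\sin^2(d_1(\tilde z_1,\tilde z_2)/2) \leq \lambda^2 \sin^2(c/2)$ with $\lambda := \sin(tC)/\sin C$. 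Combining with $\sin(\lambda c/2) \geq \lambda \sin(c/2)$ (concavity of $\sin$ on $[0,\pi]$) and the fact that both $d_1(\tilde z_1, \tilde z_2)/2$ and $\lambda c/2$ lie in $[0,\pi/2]$ finishes the bound. The main obstacle will be the monotonicity step in the third paragraph; everything else is routine bookkeeping plus the \catone\ comparison.
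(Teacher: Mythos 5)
The paper does not prove this lemma at all: it is imported verbatim as \cite[Lemma 3.3]{p}, so there is no in-paper argument to compare yours against. Judged on its own terms, your proposal is correct and complete in outline, and it follows what is essentially the standard route for this comparison result: reduce to $\mathbb{S}^2$ via Definition \ref{de3} (the perimeter bound $3C\le 3\pi/2<2D_1$ and the fact that $z_1,z_2$ lie on the two sides issuing from the common vertex $p$ make the comparison inequality apply verbatim), then compute in the sphere. I checked the pieces: the spherical law of cosines with $\cos\alpha=1-2\sin^2(\alpha/2)$ does give $\sin^2\bigl(d_1(\tilde z_1,\tilde z_2)/2\bigr)=\sin^2\bigl(t(a-b)/2\bigr)\cos^2(\alpha/2)+\sin^2\bigl(t(a+b)/2\bigr)\sin^2(\alpha/2)$; the inequality $\sin(tu)\sin C\le \sin(tC)\sin u$ for $u\in[0,C]$, $t\in[0,1]$ does follow from the non-increase of $s\mapsto s\cot(sx)$ (equivalently $\sin 2y\le 2y$), and it applies to both arguments $|a-b|/2$ and $(a+b)/2$ since each is at most $C$; and the concavity step $\lambda\sin(c/2)\le\sin(\lambda c/2)$ together with monotonicity of $\sin$ on $[0,\pi/2]$ legitimately converts $\sin(c_t/2)\le\lambda\sin(c/2)$ into $c_t\le\lambda c$ (note $c_t\le\pi$ automatically on the unit sphere and $\lambda c/2\le\pi/4$). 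The only things to spell out in a final write-up are the direction of the monotonicity in your third paragraph (you need $x\mapsto\sin((1+t)x)/\sin((1-t)x)$ to be non-increasing, since $P\le Q$) and the degenerate endpoints $t=1$, $u=C$, $(1+t)Q=\pi$, which are handled by continuity or by inspection.
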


We can get the next lemma by following the proof of Prop. 3.1 in \cite{o} with $\varepsilon=\pi/4$.
\begin{lemma}\label{l0}
Let $(X,d)$ be a \catone\ space. Then there is a constant $k>0$ such that
$$d^2(x, ty \oplus (1-t)z ) \leq t d^2 ( x,y) + (1-t) d^2 ( x,z) - { k \over 2} t(1-t) d^2 (y,z)$$ for any $t \in [0,1]$
and any points $x, y, z \in X$ such that $d(x,y) \leq \pi/4$, $d(x,z) \leq \pi/4$ and $d(y,z) \leq  \pi/2$.
\end{lemma}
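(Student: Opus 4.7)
The plan is to reduce to the model sphere $M_1=\mathbb{S}^2$ via the \catone\ comparison inequality and then verify the estimate directly on $\mathbb{S}^2$. Since the perimeter $d(x,y)+d(y,z)+d(z,x)$ is at most $\pi/4+\pi/2+\pi/4=\pi<2D_1$, the geodesic triangle $\triangle xyz$ admits a comparison triangle $\triangle\widetilde{x}\widetilde{y}\widetilde{z}$ in $\mathbb{S}^2$. Set $m_t:=ty\oplus(1-t)z$ and $\widetilde{m}_t:=t\widetilde{y}\oplus(1-t)\widetilde{z}$. Applying Definition \ref{de3} to the triangle with apex $y$ and sides $[yx]$, $[yz]$ (so that $m_t$ is the point on $[yz]$ whose position corresponds to $\widetilde{m}_t$ on $[\widetilde{y}\widetilde{z}]$) gives $d(x,m_t)\le d_1(\widetilde{x},\widetilde{m}_t)$, while the three pairwise distances among $x,y,z$ are preserved exactly. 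Hence it suffices to prove the lemma in $\mathbb{S}^2$ under the same distance hypotheses.

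On $\mathbb{S}^2$, write $a=d_1(\widetilde{x},\widetilde{y})$, $b=d_1(\widetilde{x},\widetilde{z})$, $c=d_1(\widetilde{y},\widetilde{z})$ and $r=d_1(\widetilde{x},\widetilde{m}_t)$. Applying the spherical law of cosines to the two sub-triangles $\triangle\widetilde{x}\widetilde{y}\widetilde{m}_t$ and $\triangle\widetilde{x}\widetilde{z}\widetilde{m}_t$, and using the spherical law of cosines on $\triangle\widetilde{x}\widetilde{y}\widetilde{z}$ to eliminate the angle at $\widetilde{y}$, yields an explicit trigonometric identity for $\cos r$ in terms of $a,b,c,t$. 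The required inequality is then equivalent to the statement that
\[
\Phi(a,b,c,t) \;:=\; \frac{t\,a^{2}+(1-t)\,b^{2}-r^{2}}{\tfrac{1}{2}\,t(1-t)\,c^{2}}
\]
is bounded below by some $k>0$ on the compact parameter region $a,b\in[0,\pi/4]$, $c\in[0,\pi/2]$, $t\in[0,1]$. Note that in the Euclidean limit $a,b,c\to 0$, Taylor expansion recovers the flat identity $ta^2+(1-t)b^2-r^2=t(1-t)c^2$, which gives the boundary value $\Phi\to 2$ and shows $\Phi$ extends continuously, strictly positive near the origin.

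The main obstacle is producing the explicit lower bound $k>0$ uniformly on the full parameter region, not merely near the origin. This is a delicate but elementary trigonometric computation, and it is precisely where the choice $\varepsilon=\pi/4$ is used: the bound $\pi/4$ keeps $a,b$ safely below the cut locus so that the coefficients $\sin u/u$ and $(1-\cos u)/u^{2}$ remain bounded away from $0$ and $\infty$, and together with $c\le\pi/2$ it bounds the spherical triangle's angles away from the degenerate cases. These uniform estimates let one carry out the computation of Proposition 3.1 in \cite{o} with $\varepsilon=\pi/4$ and thus extract the constant $k$ claimed in the lemma.
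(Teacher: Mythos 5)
Your proposal is correct and follows essentially the same route as the paper: the paper's entire ``proof'' of Lemma \ref{l0} is the remark that it follows by running the proof of Proposition 3.1 in \cite{o} with $\varepsilon=\pi/4$, i.e., reduction to $\mathbb{S}^2$ via the \catone\ comparison inequality followed by Ohta's trigonometric estimate. You make the comparison-triangle reduction explicit and, exactly as the paper does, defer the one genuinely nontrivial step --- the uniform positive lower bound for the trigonometric ratio $\Phi$ on the whole parameter region, not just near the Euclidean limit --- to Ohta's computation, which is where the constant $k$ actually comes from.
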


For a bounded sequence $\{x_n\}$ in $X$, define $$r(x,\{x_n\}) = \limsup_{ n \to \infty} d(x,x_n).$$ The \textit{asymptotic radius} of $\{x_n\}$ is defined by $$r(\{x_n\}) = \inf \{ r(x,\{x_n\}) : x \in X \}.$$
The \textit{asymptotic center} of $\{x_n\}$ is defined by $$A( \{x_n\} ) = \{ x \in X : r(x, \{x_n\} ) =  r(  \{x_n\} )    \}.$$
Now we can give a definition of $\Delta$-convergence, and list a few properties.
\begin{definition}
For a bounded sequence $\{x_n\}$ in $X$, the sequence $\{x_n\}$ is said to \emph{$\Delta$-converge to} $x \in X$ if $x$ is the unique asymptotic center of $\{u_n\}$ for every subsequence $\{u_n\}$ of $\{x_n\}$. In this case, we will write $\Delta-\lim_{n} x_n = x$ and call $x$ the \emph{$\Delta$-limit of} $\{x_n\}$.
\end{definition}
Then we have a lemma to show a property of a sequence which $\Delta$-converges.
\begin{lemma}\cite[Prop. 2.3]{h}\label{l000}
Let $(X,d)$ be a complete \catk\ space and let $p \in X$. Suppose that a sequence $\{ x_n \}$ $\Delta$-converges to $x$ such that $r(p,\{x_n\}) < D_K/2$. Then $$d(x,p) \leq \liminf_{ n \to \infty} d(x_n,p).$$
\end{lemma}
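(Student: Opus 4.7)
The plan is to extract a subsequence that realizes the $\liminf$, identify $x$ as the asymptotic center of this subsequence, and then apply the sharp spherical convexity identity in the model space $M_K$ on the geodesic from $x$ to $p$. Set $\alpha := \liminf_{n\to\infty} d(x_n,p)$ and choose $\{x_{n_k}\}$ with $d(x_{n_k},p)\to\alpha$. Since $\{x_n\}$ $\Delta$-converges to $x$, the point $x$ remains the unique asymptotic center of $\{x_{n_k}\}$, so with $R := r(x,\{x_{n_k}\})$ the minimality $R \le r(p,\{x_{n_k}\}) = \alpha \le r(p,\{x_n\})$ together with the hypothesis gives $R,\alpha < D_K/2$. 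The triangle inequality then forces $d(x,p) < D_K$, so $[xp]$ is the unique geodesic and for $k$ large the triangle $\triangle x p x_{n_k}$ has perimeter $< 2D_K$ and admits a comparison triangle in $M_K$. Set $D := d(x,p)$; we may assume $D > 0$.

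The case $K \le 0$ reduces to \catzero, where the standard quadratic comparison on $y_t := (1-t)x \oplus tp$ combined with $r(y_t,\{x_{n_k}\}) \ge R$ and the limit $t \to 0^+$ immediately yields $d^2(x,p) \le \alpha^2 - R^2 \le \alpha^2$. For the remaining case $K > 0$ I rescale to $K=1$. Embedding the comparison triangle in $\mathbb{S}^2 \subset \mathbb{R}^3$ and using linearity of the spherical geodesic gives the identity
\[
\cos d_1(\widetilde{y}_t,\widetilde{x}_{n_k})\sin D = \sin((1-t)D)\cos d(x_{n_k},x) + \sin(tD)\cos d(x_{n_k},p),
\]
which combined with the \catone\ inequality $d(y_t,x_{n_k}) \le d_1(\widetilde{y}_t,\widetilde{x}_{n_k})$ and monotonicity of $\cos$ on $[0,\pi]$ yields
\[
\cos d(y_t,x_{n_k})\sin D \ge \sin((1-t)D)\cos d(x_{n_k},x) + \sin(tD)\cos d(x_{n_k},p).
\]

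Taking $\liminf_{k\to\infty}$ on both sides (using $\liminf \cos d(x_{n_k},\cdot) = \cos\limsup d(x_{n_k},\cdot)$, valid because cosine is continuous and strictly decreasing on $[0,\pi]$ and all relevant distances lie in $[0,\pi]$) produces
\[
\sin D\cos r(y_t,\{x_{n_k}\}) \ge \sin((1-t)D)\cos R + \sin(tD)\cos\alpha.
\]
The asymptotic-center inequality $r(y_t,\{x_{n_k}\}) \ge R$ (hence $\cos r(y_t,\{x_{n_k}\}) \le \cos R$) converts this into $\cos R\,[\sin D - \sin((1-t)D)] \ge \sin(tD)\cos\alpha$. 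Dividing by $\sin(tD)$ and letting $t \to 0^+$, L'H\^{o}pital gives the left-hand ratio $\to \cos D$, so $\cos R\cos D \ge \cos\alpha$. Since $R,\alpha < \pi/2$, both $\cos R$ and $\cos\alpha$ are positive with $\cos R \le 1$, hence $\cos D \ge \cos\alpha$; strict monotonicity of $\cos$ on $[0,\pi/2]$ gives $D \le \alpha$, as desired. The main obstacle is the spherical identity and ensuring that all distances stay in the admissible range $[0,D_K)$ through the passage to $t = 0$; both are controlled by the initial bound $R,\alpha < D_K/2$ and the perimeter-admissibility of the comparison triangle.
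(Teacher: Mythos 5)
The paper does not actually prove this lemma: it is quoted verbatim from He--Fang--L\'opez--Li \cite[Prop.~2.3]{h}, so there is no in-paper argument to compare yours against. Your blind proof is, as far as I can check, correct and complete as a self-contained argument. The reduction to a subsequence realizing $\alpha=\liminf_n d(x_n,p)$, the identification $R=r(x,\{x_{n_k}\})\le r(p,\{x_{n_k}\})=\alpha<D_K/2$ via uniqueness of the asymptotic center of every subsequence, and the perimeter bound $\le 2(R+\alpha)<2D_K$ guaranteeing comparison triangles are all in order. The spherical identity $\cos d_1(\widetilde{y}_t,\widetilde{x}_{n_k})\sin D=\sin((1-t)D)\cos d(x_{n_k},x)+\sin(tD)\cos d(x_{n_k},p)$ is the correct linear interpolation on $\mathbb{S}^2\subset\mathbb{R}^3$, the passage $\liminf_k\cos d(\cdot,x_{n_k})=\cos\limsup_k d(\cdot,x_{n_k})$ is legitimate because all distances involved are dominated by the spherical comparison distances and hence lie in $[0,\pi]$, and the first-variation step (divide by $\sin(tD)$, let $t\to0^+$) correctly yields $\cos R\cos D\ge\cos\alpha$, whence $D\le\alpha$ since $R,\alpha<\pi/2$. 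The quadratic CAT(0) shortcut for $K\le0$ giving $D^2\le\alpha^2-R^2$ is the standard Opial-type argument and is also fine. Two small points worth making explicit in a written version: (i) the justification that $\limsup_k d(y_t,x_{n_k})\le\pi$ (needed to invert the cosine) comes from $d(y_t,x_{n_k})\le d_1(\widetilde{y}_t,\widetilde{x}_{n_k})\le\pi$, and (ii) the inequality $r(y_t,\{x_{n_k}\})\ge R$ uses only that $R$ is the asymptotic \emph{radius} (an infimum), not uniqueness of the center, which you invoke correctly. Neither is a gap.
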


\begin{definition}
Let $(X,d)$ be a complete metric space and let $F$ be a nonempty subset of $X$. Then a sequence $\{x_n \}$ in $X$ is \emph{Fej$\acute{\textit{e}}$r monotone} with respect to $F$ if $$d( x_{n+1} , q) \leq d(x_n , q)$$ for all $n \geq 0$ and all $q \in F$.
\end{definition}

\begin{definition}\cite{h}\label{de5}
 For a sequence $\{x_n\}$ in $X$, a point $x \in X$ is a \emph{$\Delta$-cluster point} of $\{x_n\}$ if there exists a subsequence of $\{ x_n \}$ that $\Delta$-converges to $x$.
\end{definition}

With Definition \ref{de5}, we will know when a sequence $\{ x_n \}$ in $X$ $\Delta$-converges to a point of $F$ if $\{ x_n \}$ is  Fej$\acute{\textit{e}}$r monotone with respect to $F$.
\begin{lemma}\cite[Lemma 3.2]{h}\label{l00}
Let $X$ be a complete \catk\ space and let $F$ be a nonempty subset of $X$. Suppose that the sequence $\{ x_n \}$ of $X$ is  Fej$\acute{\textit{e}}$r monotone with respect to $F$ and the asymptotic radius $r(\{x_n\})$ of $\{x_n\}$ is less than $D_K/2$. If any $\Delta$-cluster point $x$ of $\{ x_n \}$ belongs to $F$, then the sequence $\{x_n\}$ $\Delta$-converges to a point of $F$.
\end{lemma}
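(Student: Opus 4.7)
My plan is to use the standard Fej\'er-monotone argument, combined with the uniqueness of the asymptotic center and $\Delta$-compactness for bounded sequences of asymptotic radius less than $D_K/2$ in complete \catk\ spaces; these are foundational facts I would cite from the \catk\ literature rather than reprove here.

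First, for any fixed $q \in F$, Fej\'er monotonicity makes $d(x_n,q)$ non-increasing and bounded below by $0$, so the limit $L(q) := \lim_n d(x_n,q)$ exists; in particular $\{x_n\}$ is bounded. Since $r(\{x_n\}) < D_K/2$, $\Delta$-compactness provides at least one $\Delta$-cluster point $x$, and by hypothesis $x \in F$, so $L(x)$ is well-defined.

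The crux is to show that any two $\Delta$-cluster points coincide. Suppose $x$ and $y$ are both $\Delta$-cluster points; both lie in $F$, so $L(x)$ and $L(y)$ are defined. Choose $\{x_{n_k}\}$ with $\{x_{n_k}\} \Delta \to x$, so that $A(\{x_{n_k}\}) = \{x\}$. Because $d(x_n,x) \to L(x)$ and $d(x_n,y) \to L(y)$, the $\limsup$s along the subsequence reduce to $L(x)$ and $L(y)$, giving $r(x,\{x_{n_k}\}) = L(x)$ and $r(y,\{x_{n_k}\}) = L(y)$. Uniqueness of the asymptotic center of $\{x_{n_k}\}$ forces $L(y) > L(x)$ unless $y = x$; the symmetric argument starting from a subsequence $\Delta$-converging to $y$ yields $L(x) > L(y)$ unless $x = y$. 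Hence $x = y$, and every $\Delta$-cluster point of $\{x_n\}$ equals $x$.

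Finally, I would upgrade this to full $\Delta$-convergence: let $\{u_n\}$ be an arbitrary subsequence of $\{x_n\}$ and show $A(\{u_n\}) = \{x\}$. By $\Delta$-compactness, extract a sub-subsequence $\{v_k\}$ with $\{v_k\} \Delta \to w$; then $w$ is a $\Delta$-cluster point of $\{x_n\}$, so $w = x$. From $A(\{v_k\}) = \{x\}$ and $r(x,\{v_k\}) = L(x)$ we have $r(\cdot,\{v_k\}) \geq L(x)$ on all of $X$, with strict inequality away from $x$. Monotonicity of $\limsup$ under passage to subsequences gives $r(\cdot,\{u_n\}) \geq r(\cdot,\{v_k\})$, hence $r(\{u_n\}) \geq L(x)$; combined with $r(\{u_n\}) \leq r(x,\{u_n\}) = L(x)$, we get $x \in A(\{u_n\})$. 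If $z \in A(\{u_n\})$ with $z \neq x$, then $r(z,\{v_k\}) \leq r(z,\{u_n\}) = L(x)$ contradicts the strict bound $r(z,\{v_k\}) > L(x)$. Hence $A(\{u_n\}) = \{x\}$, and $\{x_n\}$ $\Delta$-converges to $x \in F$.

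The main obstacle is the implicit reliance on $\Delta$-compactness and the singleton property of the asymptotic center for bounded sequences of radius $< D_K/2$ in \catk\ spaces; granted these, the Fej\'er-monotone bookkeeping is entirely routine.
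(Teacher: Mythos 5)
The paper offers no proof of this lemma: it is imported verbatim from \cite{h} (Lemma 3.2), so there is no internal argument to compare against. Your proof is correct and is essentially the standard one from that source: Fej\'er monotonicity makes $\lim_n d(x_n,q)$ exist for every $q\in F$, the Opial-type comparison of these limits against the unique asymptotic center of a $\Delta$-convergent subsequence forces all $\Delta$-cluster points to coincide, and the subsequence bookkeeping you carry out upgrades this to full $\Delta$-convergence. The one genuinely external input is $\Delta$-compactness for sequences of asymptotic radius less than $D_K/2$, which you correctly isolate and which is available in the \catk\ literature (e.g.\ \cite{ef}, and the preliminaries of \cite{h}); note also, as you implicitly use, that passing to a subsequence does not increase the asymptotic radius, so the sub-subsequences you extract still satisfy the radius hypothesis.
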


\begin{lemma}\label{l1}\cite{z1}\cite{z2}
Suppose that $\{ a_n \}$ and $\{ b_n \}$ satisfy that
$$ a_n \geq 0,\; b_n \geq 0 \;\; \textit{and} \;\; a_{n+1} \leq (1 + b_n)a_n$$
for all $n \geq 0$.
If $\sum^{\infty}_{n=0} b_n$ converges, then $\lim_{ n \to \infty} a_n$ exists. Additionally, if there is a subsequence of $\{ a_n \}$ which converges to $0$, then $\lim_{ n \to \infty} a_n=0$.
\end{lemma}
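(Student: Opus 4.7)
The plan is to reduce the non-monotone inequality $a_{n+1} \leq (1+b_n)a_n$ to a genuinely monotone one by normalizing against the partial products. Set $P_0 := 1$ and $P_n := \prod_{k=0}^{n-1}(1+b_k)$ for $n \geq 1$, and define $c_n := a_n/P_n$. The hypothesis immediately yields
$$c_{n+1} = \frac{a_{n+1}}{P_{n+1}} \leq \frac{(1+b_n)a_n}{(1+b_n)P_n} = c_n,$$
so $\{c_n\}$ is a non-negative non-increasing sequence and therefore converges to some $c \geq 0$. This is the main structural move.

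Next I would verify that $P_n$ itself converges to a finite positive limit $P$. Since $b_k \geq 0$, the sequence $\{P_n\}$ is non-decreasing, and the elementary estimate $1+b_k \leq e^{b_k}$ together with the hypothesis $\sum_k b_k < \infty$ gives
$$1 \leq P_n \leq \exp\!\left(\sum_{k=0}^{\infty} b_k\right) < \infty,$$
so $P_n$ increases to some $P \in [1,\infty)$. Combining with $c_n \to c$, one obtains $a_n = c_n P_n \to cP$, which establishes the existence of $\lim_{n \to \infty} a_n$.

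For the second assertion, suppose a subsequence $\{a_{n_j}\}$ tends to $0$. Then $c_{n_j} = a_{n_j}/P_{n_j} \to 0$, because $P_{n_j} \to P > 0$. But the full sequence $\{c_n\}$ is monotone with limit $c$, so the existence of a subsequence converging to $0$ forces $c = 0$, whence $a_n = c_n P_n \to 0 \cdot P = 0$, as desired.

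There is no substantial obstacle here; the only point that requires a moment's care is the convergence of the infinite product $\prod_{k}(1+b_k)$ under the summability of $\{b_k\}$, which is standard and follows directly from $1+x \leq e^x$ for $x \geq 0$. Everything else is bookkeeping with monotone convergence.
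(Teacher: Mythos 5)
Your proof is correct: the normalization $c_n = a_n/P_n$ turns the hypothesis into genuine monotonicity, the bound $P_n \leq \exp\bigl(\sum_k b_k\bigr)$ gives convergence of the partial products to a finite $P \geq 1$, and both assertions follow cleanly. The paper itself offers no proof of this lemma (it is quoted from \cite{z1} and \cite{z2}), but your argument is the standard one and is complete as written.
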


\section{Ishikawa iteration process on \catk\ spaces}

\begin{lemma}\label{l3}\cite[Theorem 3.4]{p}
Let $X$ be a complete \catone\ space and let $T: X \to X$ be a nonexpansive mapping such that $F:=\mathrm{Fix}(T) \neq \emptyset$. Then $F$ is closed and $\pi$-convex.
\end{lemma}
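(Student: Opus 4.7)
The plan is to handle closedness and $\pi$-convexity separately, both reducing to straightforward applications of nonexpansiveness together with the uniqueness-of-geodesics statement in Lemma \ref{l456}(1).

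For closedness, I would take a sequence $\{x_n\}\subset F$ with $x_n \to x$ in $X$ and show $T(x)=x$. By the triangle inequality and nonexpansiveness,
\[
d(T(x),x)\;\leq\; d(T(x),T(x_n)) + d(T(x_n),x)\;\leq\; d(x,x_n) + d(x_n,x),
\]
which tends to $0$. Hence $T(x)=x$ and $x\in F$. This step is routine and does not use the curvature bound.

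For $\pi$-convexity, fix $x,y\in F$ with $d(x,y) <\pi$. By Lemma \ref{l456}(1) the geodesic $[xy]$ is unique, so it suffices to prove $T(z)=z$ for every $z = tx\oplus (1-t)y$ with $t\in[0,1]$. Since $T$ fixes $x$ and $y$, nonexpansiveness gives
\[
d(T(z),x)\;=\;d(T(z),T(x))\;\leq\;d(z,x)\;=\;(1-t)\,d(x,y),
\]
and similarly $d(T(z),y)\leq t\,d(x,y)$. Adding these,
\[
d(T(z),x) + d(T(z),y) \;\leq\; d(x,y),
\]
and the reverse inequality holds by the triangle inequality. Thus both inequalities are equalities, so $T(z)$ lies on a geodesic from $x$ to $y$ of length $d(x,y)<\pi$ (namely the concatenation $[x,T(z)]\cup[T(z),y]$). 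By the uniqueness of geodesics in a \catone\ space for points at distance less than $\pi$, that geodesic must coincide with $[xy]$, and since $d(x,T(z)) = (1-t)\,d(x,y) = d(x,z)$, we conclude $T(z)=z$.

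The only subtle point is the final reduction: extracting $T(z)=z$ from the equality case of the triangle inequality. In a general metric space this would fail, but the hypothesis $d(x,y)<\pi = D_1$ puts us inside the range where Lemma \ref{l456}(1) gives uniqueness of geodesics, and that is exactly what converts ``$T(z)$ lies on some geodesic from $x$ to $y$ at the right distance'' into ``$T(z)=z$.'' Everything else is a direct consequence of nonexpansiveness.
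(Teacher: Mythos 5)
Your proof is correct. Note, however, that the paper does not prove this lemma at all: it is imported verbatim as Theorem 3.4 of Piatek's paper \cite{p}, so there is no internal argument to compare against. Your self-contained proof is the standard one and it holds up. Closedness is immediate from continuity of the nonexpansive map $T$, as you say. For $\pi$-convexity, the chain
$d(T(z),x)\leq (1-t)d(x,y)$, $d(T(z),y)\leq t\,d(x,y)$, together with the triangle inequality $d(T(z),x)+d(T(z),y)\geq d(x,y)$, does force equality in both estimates, and the concatenation $[x\,T(z)]\cup[T(z)\,y]$ is then genuinely a geodesic from $x$ to $y$ (for $p\in[x\,T(z)]$ and $q\in[T(z)\,y]$ one checks $d(p,q)=d(p,T(z))+d(T(z),q)$ by sandwiching with the triangle inequality applied to $x,p,q,y$); this is the one point that deserves a line of justification, and your parenthetical correctly identifies it. Uniqueness of geodesics between points at distance $<\pi=D_1$ from Lemma \ref{l456}(1) then forces $T(z)=z$. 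The only cosmetic remark: you should also note that $[x\,T(z)]$ and $[T(z)\,y]$ exist in the first place, which follows since $d(x,T(z))\leq d(x,z)<\pi$ and likewise for the other segment, as $X$ is a $\pi$-geodesic space.
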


\begin{lemma}\label{l5}
Let $X$ be a complete \catone\ space and let $T: X \to X$ be a nonexpansive mapping such that $F:=\mathrm{Fix}(T) \neq \emptyset$. If $\{x_n \}$ is defined by \eqref{e3} for $x_0 \in X$ such that $d(x_0,F) \leq \pi/4$, then there is a unique point $p$ in $F$ such that $x_n$, $y_n$, $T(x_n)$ and $T(y_n)$ are at distance $\leq d(x_0,p)$ from $p$.
\end{lemma}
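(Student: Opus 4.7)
The plan is to identify $p$ as the unique nearest point of $F$ to $x_0$, and then show by induction that the four sequences $\{x_n\},\{T(x_n)\},\{y_n\},\{T(y_n)\}$ all stay inside the closed ball $\overline{B}_{d(x_0,p)}[p]$.

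First I would invoke Lemma \ref{l3} to know that $F$ is closed and $\pi$-convex, i.e.\ $D_1$-convex. Since the hypothesis gives $d(x_0,F)\leq \pi/4< \pi/2=D_1/2$, Lemma \ref{l123} applied in the \catone\ space $X$ produces a \emph{unique} point $p\in F$ with $d(x_0,p)=d(x_0,F)\leq \pi/4$; this is the $p$ that I claim witnesses the lemma, and its uniqueness is inherited from the uniqueness of the metric projection.

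Let $r:=d(x_0,p)\leq \pi/4$. I would prove by induction on $n\geq 0$ that
\[
\max\bigl\{d(x_n,p),\ d(T(x_n),p),\ d(y_n,p),\ d(T(y_n),p)\bigr\}\leq r.
\]
The base $d(x_0,p)=r$ is immediate. Given $d(x_n,p)\leq r$, nonexpansiveness plus $T(p)=p$ yields $d(T(x_n),p)=d(T(x_n),T(p))\leq d(x_n,p)\leq r$. Since both $x_n$ and $T(x_n)$ lie in $\overline{B}_r[p]$, the distance $d(x_n,T(x_n))\leq 2r\leq \pi/2<\pi=D_1$, so Lemma \ref{l456}(1) provides the unique geodesic along which $y_n=s_n T(x_n)\oplus(1-s_n)x_n$ is defined, and Lemma \ref{l456}(2) (applied to the radius $r<D_1/2$) guarantees $\overline{B}_r[p]$ is convex, whence $y_n\in\overline{B}_r[p]$. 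One more application of nonexpansiveness gives $d(T(y_n),p)\leq d(y_n,p)\leq r$, and then the same convexity argument applied to $x_{n+1}=t_nT(y_n)\oplus(1-t_n)x_n$ places $x_{n+1}$ in $\overline{B}_r[p]$, closing the induction.

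I do not expect a real obstacle; the only bookkeeping point is to verify at each step that the radius under consideration never exceeds $D_1/2=\pi/2$, which is automatic because $r\leq \pi/4$. This keeps us inside the regime where Lemma \ref{l456} applies, and it is also what makes Lemma \ref{l123} available to single out the projection $p$ in the first place.
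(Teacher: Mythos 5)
Your proposal is correct and follows essentially the same route as the paper: identify $p$ as the unique metric projection of $x_0$ onto $F$ via Lemma \ref{l123} together with Lemma \ref{l3}, then run an induction using nonexpansiveness of $T$ and convexity of the closed ball of radius $d(x_0,p)\leq\pi/4$ about $p$ to keep $x_n$, $y_n$, $T(x_n)$, $T(y_n)$ inside that ball. The only cosmetic difference is that the paper phrases the inductive invariant as the chain $d(p,y_n)\leq d(p,x_n)\leq d(p,x_0)$ rather than as a single maximum over the four quantities.
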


\begin{proof}
Since $d(x_0,F) \leq \pi/4$, by Lemma \ref{l123} and \ref{l3}, there is a unique point $p$ in $F$ such that $d(x_0,p) = d(x_0,F)$.
By induction, we want to show that $$d(p, y_n ) \leq d(p, x_n) \leq d(p, x_0).$$
For $n=0$, since $T$ is nonexpansive, we have $d(p, T(x_0)  ) \leq d( p, x_0  ) \leq \pi/4$. Since $B_{\pi/4}[p]$ is convex, we get $$d(p,y_0) = d(p, s_0 T(x_0) \oplus (1-s_0) x_0) \leq d(p,x_0).$$
Suppose that $d(p, y_n) \leq d(p, x_n) \leq d(p, x_0)$.
Since $d(p, T(y_n)  ) \leq d( p, y_n  ) \leq \pi/4$ and $B_{\pi/4}[p]$ is convex, $$d(p,x_{n+1}) = d(p, t_n T(y_n) \oplus (1- t_n ) x_n) \leq d(p,x_n).$$
Since $d(p, T( x_{ n+1} ) ) \leq d( p, x_{n+1}  ) \leq \pi/4$, $$d(p, y_{n+1}) = d(p, s_{n+1} T(x_{n+1}) \oplus (1-s_{n+1}) x_{n+1}) \leq d(p,x_{n+1}).$$
Therefore $$d(p, y_{n+1}) \leq d(p, x_{n+1}) \leq d(p, x_{n}).$$
\end{proof}

We will prove Lemma \ref{l2} and Lemma \ref{l4}, which are obtained by following the proofs in \cite{pl}.

If $d(x_0,F)=0$, then $x_0 \in F$ and hence by definition \eqref{e3}, $x_n=x_0$ for all $n$. So we just consider the case $d(x_0,F)>0$.
\begin{lemma}\label{l2}
Let $X$ be a complete \catone\ space and let $T: X \to X$ be a nonexpansive mapping such that $F:=\mathrm{Fix}(T) \neq \emptyset$. If $\{x_n \}$ is defined by \eqref{e3} for $x_0 \in X$ such that $d(x_0,F) \leq \pi/4$, then
$$d( T (x_{n+1}), x_{n+1} ) \leq [ 1 + 4 {  C \over \sin C } t_n (1-t_n) s_n ] d( T (x_n) , x_n )$$ for all $n \geq 0$ where $C:= 2 d(x_0,F)$.
\end{lemma}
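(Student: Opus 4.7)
The plan is to compare $x_{n+1}$ to two Mann-type auxiliary points based at $x_n$ and $y_n$, apply Lemma~\ref{l} from each of the two endpoints of the geodesic $[x_n, T(y_n)]$, and combine the resulting symmetric bounds via a minimum. By Lemma~\ref{l5}, there is $p \in F$ with $d(x_0, p) = C/2$ such that $x_n, y_n, T(x_n), T(y_n) \in B_{C/2}[p]$; in particular every pairwise distance among these four points is at most $C \leq \pi/2$, so Lemma~\ref{l} (with constant $C$) applies throughout. Write $A_n := d(T(x_n), x_n)$ and introduce
\[
v_n := t_n T(x_n) \oplus (1-t_n) x_n, \qquad w_n := t_n T(y_n) \oplus (1-t_n) y_n.
\]
Standard triangle inequalities with nonexpansiveness give $d(T(y_n), y_n) \leq A_n$, and then $d(T(v_n), v_n) \leq A_n$ and $d(T(w_n), w_n) \leq d(T(y_n), y_n) \leq A_n$.

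Now apply Lemma~\ref{l} twice. With base $p = x_n$, endpoints $T(y_n)$ and $T(x_n)$, and parameter $t_n$, nonexpansiveness and $d(x_n, y_n) = s_n A_n$ yield
\[
d(x_{n+1}, v_n) \leq \frac{\sin(t_n C)}{\sin C}\, d(T(y_n), T(x_n)) \leq \frac{\sin(t_n C)}{\sin C}\, s_n A_n.
\]
A second application, with base $p = T(y_n)$, endpoints $x_n$ and $y_n$, and parameter $1-t_n$, gives
\[
d(x_{n+1}, w_n) \leq \frac{\sin((1-t_n) C)}{\sin C}\, s_n A_n.
\]
The triangle inequality combined with nonexpansiveness then produces
\[
d(T(x_{n+1}), x_{n+1}) \leq 2\, d(x_{n+1}, v_n) + d(T(v_n), v_n) \leq A_n + \frac{2 \sin(t_n C)}{\sin C}\, s_n A_n,
\]
and the analogous bound with $w_n$ and $1-t_n$ in place of $v_n$ and $t_n$.

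The main conceptual step is extracting the symmetric $t_n(1-t_n)$ factor from the minimum of these two bounds (each of which is only linear in $t_n$ or in $1-t_n$ separately). Since $\sin$ is increasing on $[0, \pi/2]$ and $\sin\theta \leq \theta$,
\[
\min\{\sin(t_n C),\, \sin((1-t_n) C)\} = \sin(\min\{t_n, 1-t_n\}\, C) \leq \min\{t_n, 1-t_n\}\, C.
\]
The elementary inequality $\min\{a, 1-a\} \leq 2 a(1-a)$ for $a \in [0,1]$ (WLOG $a \leq 1/2$; then $\min = a$ and $1 \leq 2(1-a)$) converts this further to $2 t_n(1-t_n) C$, so substituting back gives $d(T(x_{n+1}), x_{n+1}) \leq [1 + 4\, \frac{C}{\sin C}\, t_n(1-t_n)\, s_n]\, A_n$, as claimed.
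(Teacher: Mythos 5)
Your proof is correct, and the core of the argument is the same as the paper's: you compare $x_{n+1}$ to the two auxiliary points $v_n=t_nT(x_n)\oplus(1-t_n)x_n$ and $w_n=t_nT(y_n)\oplus(1-t_n)y_n$, apply Lemma~\ref{l} once from the base $x_n$ (getting a factor $\sin(t_nC)/\sin C$ times $d(T(x_n),T(y_n))\le s_n\,d(T(x_n),x_n)$) and once from the base $T(y_n)$ (getting $\sin((1-t_n)C)/\sin C$ times $d(x_n,y_n)$), exactly as the paper does, with Lemma~\ref{l5} justifying the hypotheses of Lemma~\ref{l} in both cases. The only genuine difference is the last step, where the symmetric factor $t_n(1-t_n)$ is extracted. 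The paper multiplies the first one-sided bound by $(1-t_n)$ and the second by $t_n$ and adds, so the two products $t_n(1-t_n)$ appear directly and sum to the constant $4\frac{C}{\sin C}$. You instead take the minimum of the two bounds and invoke $\min\{\sin(t_nC),\sin((1-t_n)C)\}\le \min\{t_n,1-t_n\}\,C\le 2t_n(1-t_n)C$; this lands on the identical constant, and since the minimum of two quantities is at most any convex combination of them, your intermediate estimate is marginally sharper, at the price of one extra elementary inequality. Either route is fine; no gaps.
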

\begin{proof}
Since $d (T (x_n) , t_n T(x_n) \oplus (1-t_n)x_n ) = ( 1 - t_n )d(T(x_n), x_n)$ and $T$ is nonexpansive, we get
\begin{equation*}
\begin{split}
d( T (x_{n+1}), x_{n+1} ) &\leq d(T (x_{n+1}) , T( t_n T(x_n) \oplus (1-t_n) x_n) ) \\ &+ d( T( t_n T(x_n) \oplus (1-t_n) x_n) , T (x_n) ) \\
&+ d (T (x_n) , t_n T(x_n) \oplus (1-t_n)x_n ) \\ &+ d( t_n T(x_n) \oplus (1-t_n) x_n , x_{n+1} ) \\
&\leq 2 d( t_n T(x_n) \oplus (1-t_n) x_n , x_{n+1} ) \\ &+ d(  t_n T(x_n) \oplus (1-t_n) x_n ,  x_n ) \\  &+ ( 1 - t_n )d(T(x_n), x_n). \\
\end{split}
\end{equation*}
Since $ d(  t_n T(x_n) \oplus (1-t_n) x_n ,  x_n ) =  t_n d(T(x_n), x_n)$,
it becomes
\begin{equation*}
d( T (x_{n+1}), x_{n+1} ) \leq 2 d( t_n T(x_n) \oplus (1-t_n) x_n , x_{n+1} ) + d(T(x_n), x_n).
\end{equation*}
Note that $d(x_n,T(x_n)) \leq C$, $d(x_n,T(y_n)) \leq C$ and $d(T(x_n),T(y_n)) \leq C$ from Lemma \ref{l5}. Since $d( x_n ,y_n) = s_n d( T (x_n), x_n)$ and $C \leq \pi /2$, by Lemma \ref{l},
\begin{equation*}
\begin{split}
d( T (x_{n+1}), x_{n+1} )
&\leq 2 {\sin t_n C  \over \sin C } d( T(x_n) , T(y_n) ) + d(T(x_n), x_n) \\
&\leq 2 { t_n C \over \sin C } d( x_n ,y_n) + d(T(x_n), x_n) \\
&= (1 + 2 { t_n C \over \sin C } s_n) d( T (x_n), x_n). \\
\end{split}
\end{equation*}
Then, multiplying by $(1 - t_n)$, we have
\begin{equation}\label{e1}
(1 - t_n) d(T(x_{n+1}), x_{n+1} ) \leq [ 1 - t_n  + 2 { C \over \sin C } t_n(1-t_n)s_n ] d( T (x_n), x_n).
\end{equation}
Also, since $d (T (y_n) , t_n T(y_n) \oplus (1-t_n)y_n ) = ( 1 - t_n )d(T(y_n), y_n)$ and $T$ is nonexpansive, we get
\begin{equation*}
\begin{split}
d( T (x_{n+1}), x_{n+1} ) &\leq d(T (x_{n+1}) , T( t_n T(y_n) \oplus (1-t_n) y_n) ) \\ &+ d( T( t_n T(y_n) \oplus (1-t_n) y_n) , T (y_n) ) \\
&+ d (T (y_n) , t_n T(y_n) \oplus (1-t_n)y_n ) \\ &+ d( t_n T(y_n) \oplus (1-t_n) y_n , x_{n+1} ) \\
&\leq 2 d( t_n T (y_n) \oplus (1-t_n) y_n , x_{n+1} ) \\ &+ d(  t_n T(y_n) \oplus (1-t_n) y_n ,  y_n ) \\  &+ ( 1 - t_n )d(T(y_n), y_n). \\
\end{split}
\end{equation*}
Since $d(  t_n T(y_n) \oplus (1-t_n) y_n ,  y_n ) =  t_n d(T(y_n), y_n)$,
it becomes
\begin{equation*}
d( T (x_{n+1}), x_{n+1} ) \leq 2 d( t_n T (y_n) \oplus (1-t_n) y_n , x_{n+1} ) + d(T(y_n), y_n).
\end{equation*}
Since $d(x_n,T(y_n)) \leq C$, $d(y_n,T(y_n)) \leq C$ and $d(x_n,y_n) \leq C$, by Lemma \ref{l},
\begin{equation*}
\begin{split}
d( T (x_{n+1}), x_{n+1} )
&\leq 2 {\sin (1-t_n) C  \over \sin C } d( x_n , y_n ) + d(T(y_n), y_n) \\
&\leq 2 { (1-t_n) C \over \sin C } d( x_n ,y_n) + d(T(y_n), y_n) \\
&\leq 2 { (1-t_n) C \over \sin C } d( x_n ,y_n) + d(T(y_n), T(x_n)) + d(T (x_n) , y_n) \\
&\leq 2 { (1-t_n) C \over \sin C } d( x_n ,y_n) + d( y_n,  x_n) + d(T (x_n) , y_n) \\
&\leq 2 { (1-t_n) C \over \sin C } s_n d( x_n ,T (x_n) ) + s_n d( T (x_n),  x_n) \\ &+ (1-s_n) d(T (x_n) , x_n) \\
&= [1 + 2 { (1-t_n) C \over \sin C } s_n ]d( T (x_n) , x_n ).  \\
\end{split}
\end{equation*}
Then we have
\begin{equation}\label{e2}
t_n d( T (x_{n+1}), x_{n+1} ) \leq [t_n + 2 {  C \over \sin C } t_n (1-t_n) s_n ]d( T (x_n) , x_n ).
\end{equation}
From \eqref{e1} and \eqref{e2}, we get
$$d( T (x_{n+1}), x_{n+1} ) \leq [ 1 + 4 {  C \over \sin C } t_n (1-t_n) s_n ] d( T (x_n) , x_n ).$$
\end{proof}

\begin{lemma}\label{l4}
Let $X$ be a complete \catone\ space and let $T: X \to X$ be a nonexpansive mapping such that $F:=\mathrm{Fix}(T) \neq \emptyset$. Suppose that $\{t_n \}$ and $\{s_n \}$ satisfy that
$$\sum^{\infty}_{n=0} t_n(1-t_n) = \infty  \;\; \textit{and} \;\;
\sum^{\infty}_{n=0} t_n(1-t_n) s_n < \infty.$$
If $\{x_n \}$ is defined by \eqref{e3} for $x_0 \in X$ such that $d(x_0,F) \leq \pi/4$, then $$\lim_{ n \to \infty} d(T (x_n) , x_n ) = 0.$$
\end{lemma}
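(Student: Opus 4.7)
The plan is to combine the growth bound of Lemma \ref{l2} with a quadratic telescoping estimate derived from Lemma \ref{l0}. First I fix the unique nearest point $p \in F$ to $x_0$ provided by Lemma \ref{l5}; its proof actually shows that the sequence $\{d(p,x_n)\}$ is nonincreasing and that $x_n, y_n, T(x_n), T(y_n) \in B_{\pi/4}[p]$, so every pair among these points is at distance at most $\pi/2$, which legitimizes every invocation of Lemma \ref{l0} below.

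Setting $a_n := d(T(x_n), x_n)$, Lemma \ref{l2} reads $a_{n+1} \leq (1 + b_n) a_n$ with $b_n := 4(C/\sin C)\, t_n(1-t_n) s_n$, and $\sum b_n < \infty$ by hypothesis. Lemma \ref{l1} then implies that $\lim_n a_n$ exists, so it suffices to exhibit a subsequence $a_{n_k} \to 0$; the second half of Lemma \ref{l1} will then upgrade this to $\lim_n a_n = 0$.

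To produce such a subsequence I apply Lemma \ref{l0} with $(x,y,z,t) = (p, T(y_n), x_n, t_n)$. Using nonexpansiveness of $T$ and Lemma \ref{l5} to get $d(p, T(y_n)) \leq d(p, y_n) \leq d(p, x_n)$, the inequality collapses to
\begin{equation*}
d^2(p, x_{n+1}) \leq d^2(p, x_n) - \tfrac{k}{2}\, t_n(1-t_n)\, d^2(T(y_n), x_n),
\end{equation*}
which telescopes to $\sum_n t_n(1-t_n)\, d^2(T(y_n), x_n) < \infty$. The triangle inequality together with $d(T(x_n), T(y_n)) \leq d(x_n, y_n) = s_n a_n$ yields $(1-s_n) a_n \leq d(T(y_n), x_n)$, hence
\begin{equation*}
\sum_n t_n(1-t_n)(1-s_n)^2 a_n^2 < \infty.
\end{equation*}

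The main obstacle is the very last step: a priori the effective weights $t_n(1-t_n)(1-s_n)^2$ could be much smaller than $t_n(1-t_n)$, so their divergence is not automatic. I will handle this via the identity $t_n(1-t_n) - t_n(1-t_n)(1-s_n)^2 = t_n(1-t_n)(2s_n - s_n^2) \leq 2\, t_n(1-t_n) s_n$; summing and combining $\sum t_n(1-t_n) s_n < \infty$ with $\sum t_n(1-t_n) = \infty$ forces $\sum t_n(1-t_n)(1-s_n)^2 = \infty$. Together with the preceding summability this gives $\liminf_n a_n = 0$, completing the plan.
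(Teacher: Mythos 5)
Your proof is correct, and its skeleton coincides with the paper's: Lemma \ref{l2} plus Lemma \ref{l1} to get that $\lim_n d(T(x_n),x_n)$ exists, then the quasi-convexity inequality of Lemma \ref{l0} applied to $x_{n+1}=t_nT(y_n)\oplus(1-t_n)x_n$, telescoped against the monotone sequence $d^2(p,x_n)$, to obtain $\sum_n t_n(1-t_n)\,d^2(T(y_n),x_n)<\infty$. The only place you diverge is the final extraction of a subsequence along which $d(T(x_n),x_n)\to 0$. The paper adds the summable term $t_n(1-t_n)s_n$ into the bracket, concludes $\liminf_n[d^2(T(y_n),x_n)+s_n]=0$, and so finds a single subsequence on which both $d(T(y_{n_k}),x_{n_k})\to 0$ and $s_{n_k}\to 0$, which makes the factor $(1-s_{n_k})$ in $(1-s_{n_k})d(T(x_{n_k}),x_{n_k})\le d(T(y_{n_k}),x_{n_k})$ harmless. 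You instead absorb $(1-s_n)^2$ into the weights and check, via $t_n(1-t_n)\bigl(1-(1-s_n)^2\bigr)\le 2t_n(1-t_n)s_n$, that $\sum_n t_n(1-t_n)(1-s_n)^2=\infty$, which gives $\liminf_n d(T(x_n),x_n)=0$ directly. Both devices exploit the same hypothesis $\sum t_n(1-t_n)s_n<\infty$ in the same way; yours is marginally cleaner in that it avoids juggling a joint subsequence, while the paper's has the side benefit of exhibiting the explicit inequality \eqref{e8} reused later. Your explicit verification that Lemma \ref{l5} licenses the hypotheses of Lemma \ref{l0} (all relevant points lie in $B_{\pi/4}[p]$) is a point the paper leaves implicit, and is welcome.
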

\begin{proof}
By Lemma \ref{l1} and \ref{l2}, $\lim_{ n \to \infty} d(T (x_n) , x_n )$ exists.
Let $p$ be the unique point in $F$ such that $d(x_0,p) = d(x_0,F)$.
By Lemma \ref{l0}, we get
\begin{equation*}
\begin{split}
d^2(p,x_{n+1}) &= d^2(p, t_n T (y_n) \oplus (1 - t_n) x_n ) \\
&\leq t_n d^2(p, T (y_n) ) + (1-t_n) d^2 (p, x_n) - { k \over 2} t_n(1-t_n) d^2 (T (y_n) , x_n). \\
\end{split}
\end{equation*}
Then since $T$ is nonexpansive, we have
\begin{equation}\label{e4}
d^2(p,x_{n+1}) \leq t_n d^2(p,  y_n ) + (1-t_n) d^2 (p, x_n) - { k \over 2} t_n(1-t_n) d^2 (T (y_n) , x_n).
\end{equation}
By Lemma \ref{l0}, also we get
\begin{equation*}
\begin{split}
d^2(p,y_{n}) &= d^2(p, s_n T (x_n) \oplus (1 - s_n) x_n ) \\
&\leq s_n d^2(p, T (x_n) ) + (1-s_n) d^2 (p, x_n) - { k \over 2} s_n(1-s_n) d^2 (T (x_n) , x_n). \\
\end{split}
\end{equation*}
Since $T$ is nonexpansive,
\begin{equation}\label{e5}
\begin{split}
d^2(p,y_{n})
&\leq s_n d^2(p,  x_n ) + (1-s_n) d^2 (p, x_n) - { k \over 2} s_n(1-s_n) d^2 (T (x_n) , x_n)  \\
&\leq d^2(p,  x_n ).   \\
\end{split}
\end{equation}
By \eqref{e4} and \eqref{e5}, we obtain $$ d^2(p,x_{n+1}) \leq d^2(p,x_{n}) - { k \over 2} t_n(1-t_n) d^2 (T (y_n) , x_n).$$
This implies
\begin{equation}\label{e6}
{ k \over 2}  \sum^{\infty}_{n = 0} t_n(1-t_n) d^2 (T (y_n) , x_n) \leq d^2(p,x_0) < \infty.
\end{equation}
Since $\sum^{\infty}_{n=0} t_n(1-t_n) s_n < \infty$, from \eqref{e6}, we get
$$\sum^{\infty}_{n = 0} t_n(1-t_n) [ d^2 (T (y_n) , x_n) + s_n]  < \infty.$$
Since $\sum^{\infty}_{n=0} t_n(1-t_n) = \infty$, it implies
$$ \liminf_{ n \to \infty} [ d^2 (T (y_n) , x_n) + s_n] =0.$$
Then there exists a subsequence $\{n_k \}$ of $\{n \}$ such that
\begin{equation}\label{e7}
 \lim_{k \to  \infty} d (T (y_{n_k}) , x_{n_k}) = 0 \;\;\;\;\mathrm{and} \;\;  \lim_{k \to  \infty} s_{n_k} = 0.
 \end{equation}
Also,
\begin{equation*}
\begin{split}
 d (T (x_{n_k}) , x_{n_k}) &\leq d (T (x_{n_k}) , T (y_{n_k})) +  d (T (y_{n_k}) , x_{n_k})  \\
 & \leq  d ( x_{n_k} ,  y_{n_k})  + d (T (y_{n_k}) , x_{n_k})  \\
 & = s_{n_k} d (T (x_{n_k}) , x_{n_k}) + d (T (y_{n_k}) , x_{n_k}).
\end{split}
\end{equation*}
Then it becomes
\begin{equation}\label{e8}
(1- s_{n_k}) d (T (x_{n_k}) , x_{n_k}) \leq d (T (y_{n_k}) , x_{n_k}).
\end{equation}
From \eqref{e7} and \eqref{e8}, we get
\begin{equation}\label{e14}
\lim_{k \to \infty} d (T (x_{n_k}) , x_{n_k}) = 0.
\end{equation}
Since $\lim_{ n \to \infty} d(T (x_n) , x_n )$ exists, \eqref{e14} implies that $$\lim_{ n \to \infty} d(T (x_n) , x_n )=0.$$
\end{proof}

We will prove Theorem \ref{t1}, which is obtained by following a first part of the proof in \cite[Theorem 3.1]{h}.

\begin{theorem}\label{t1}
Let $X$ be a complete \catk\ space and let $T: X \to X$ be a nonexpansive mapping such that $F:=\mathrm{Fix}(T) \neq \emptyset$.
Suppose that $\{x_n\}$ is defined by \eqref{e3} under the conditions $$\sum^{\infty}_{n=0} t_n(1-t_n) = \infty
\;\; \textit{and} \;\;
\sum^{\infty}_{n=0} t_n(1-t_n) s_n < \infty.$$ Then, for each $x_0 \in X$ with $d(x_0, F ) < D_K /4 $, the sequence $\{x_n\}$ $\Delta$-converges to a point of $F$.
\end{theorem}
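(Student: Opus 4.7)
My plan is to reduce the proof to the \catone\ setting. When $K>0$ we rescale the metric by $\sqrt{K}$, so that the rescaled space is \catone\ and the hypothesis $d(x_0,F)<D_K/4$ becomes $d(x_0,F)<\pi/4$; when $K\leq 0$ the space is \catzero\ and every ball is convex, so the same scheme goes through with the $\pi/4$ threshold replaced by an arbitrary bound on $d(x_0,p)$. I will concentrate on the rescaled $K=1$ case. Lemma \ref{l3} tells us $F$ is closed and $\pi$-convex, so Lemma \ref{l123} produces a unique $p\in F$ with $d(x_0,p)=d(x_0,F)<\pi/4$; Lemma \ref{l5} keeps the iterates $x_n,y_n,T(x_n),T(y_n)$ inside the closed ball $B:=B_{d(x_0,p)}[p]$, which is convex by Lemma \ref{l456}(2), and Lemma \ref{l4} yields $d(T(x_n),x_n)\to 0$.

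Next I would set $F':=F\cap B$ and verify that $\{x_n\}$ is Fej\'er monotone with respect to $F'$. For any $q\in F'$ the triangle inequality gives $d(x_n,q)\leq 2d(x_0,p)<\pi/2=D_1/2$, so $B_{d(x_n,q)}[q]$ is convex. By nonexpansiveness $T(x_n)$ lies in this ball, and convexity places $y_n=s_n T(x_n)\oplus(1-s_n)x_n$ in it as well, whence $d(y_n,q)\leq d(x_n,q)$. The same pair of steps applied to $T(y_n)$ and then to $x_{n+1}=t_n T(y_n)\oplus(1-t_n)x_n$ yields $d(x_{n+1},q)\leq d(x_n,q)$. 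Moreover $r(\{x_n\})\leq d(p,x_0)<\pi/4<D_1/2$, so one of the two hypotheses of Lemma \ref{l00} is in hand.

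It remains to show that every $\Delta$-cluster point of $\{x_n\}$ belongs to $F'$. Suppose a subsequence $\{x_{n_k}\}$ $\Delta$-converges to $x^{\ast}$; since $\{x_{n_k}\}\subset B$ and $B$ is closed, $x^{\ast}\in B$. To establish $T(x^{\ast})=x^{\ast}$, I would combine nonexpansiveness with $d(T(x_n),x_n)\to 0$:
$$\limsup_{k}d(x_{n_k},T(x^{\ast}))\leq\limsup_{k}\bigl[\,d(x_{n_k},T(x_{n_k}))+d(T(x_{n_k}),T(x^{\ast}))\,\bigr]\leq\limsup_{k}d(x_{n_k},x^{\ast}).$$
The right-hand side equals the asymptotic radius of $\{x_{n_k}\}$, so $T(x^{\ast})$ is an asymptotic center of $\{x_{n_k}\}$; uniqueness of the asymptotic center in \catone\ for sequences confined to a ball of radius $<\pi/2$ forces $T(x^{\ast})=x^{\ast}$, whence $x^{\ast}\in F'$. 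Applying Lemma \ref{l00} with $F'$ in place of $F$ then delivers $\Delta$-convergence of $\{x_n\}$ to a point of $F'\subset F$.

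The step I expect to be the main obstacle is propagating the Fej\'er inequality from the nearest point $p$ (which is all Lemma \ref{l5} supplies directly) to every $q\in F'$, together with a clean invocation of uniqueness of the asymptotic center in the \catone\ regime. Both points rely on confining the relevant objects to a convex ball of radius $<D_K/2$, which is precisely the role played by the hypothesis $d(x_0,F)<D_K/4$ together with the doubling estimate $d(x_n,q)\leq 2d(x_0,p)$.
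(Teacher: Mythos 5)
Your proposal follows essentially the same route as the paper: rescale to $K=1$, use the nearest-point projection onto $F$ (Lemmas \ref{l3} and \ref{l123}) to get $p$, establish Fej\'er monotonicity with respect to a localized piece of $F$ via convexity of small balls, invoke Lemma \ref{l4} for $d(T(x_n),x_n)\to 0$, show every $\Delta$-cluster point is fixed by the asymptotic-center argument, and close with Lemma \ref{l00}. The one step that is not correctly justified as written is the claim that $x^{\ast}\in B$ ``since $\{x_{n_k}\}\subset B$ and $B$ is closed.'' $\Delta$-convergence is a weak-type convergence, and closedness of a set containing the sequence does not by itself place the $\Delta$-limit in that set (compare weak limits of sequences on a sphere in Hilbert space). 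The conclusion is true here because $B$ is a closed \emph{ball}, but it needs an argument: the paper uses Lemma \ref{l000} to get $d(x^{\ast},p)\leq\liminf_k d(x_{n_k},p)\leq d(x_0,p)$, which immediately yields $x^{\ast}\in B_{d(x_0,p)}[p]$ (the paper phrases this as $x\in B_{\pi/2}(x_0)$ for its choice $F_0=F\cap B_{\pi/2}(x_0)$, but it is the same device). With that one-line repair your argument is complete; the remaining differences (your $F'=F\cap B_{d(x_0,p)}[p]$ versus the paper's $F_0$, and your appeal to an external uniqueness statement for the asymptotic center where the definition of $\Delta$-convergence already supplies uniqueness) are cosmetic.
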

\begin{proof}
Rescaling the metric by $1/\sqrt{K}$, we may assume that $K=1$.

Set $F_0:= F \cap B_{\pi/2}(x_0)$.
For any $q \in F_0$, since $d(T(x_0) ,q ) \leq d(x_0,q ) $ and since the open ball $B_r(q)$ in $X$ with radius $r < \pi/2$ is convex, we have $$d( y_0 , q) = d( s_0 T(x_0) \oplus (1-s_0) x_0 , q ) \leq  d(x_0,q ).$$ Similarly, since $d(T(y_0) ,q ) \leq d(y_0,q )$ and since the open ball $B_r(q)$ is convex,
we have $$d(x_1 , q) = d( t_0 T (y_0) \oplus (1 - t_0) x_0 ,q) \leq d(x_0 ,q).$$
Using mathematical induction, we can easily get that $$d( x_{n+1} , q ) \leq d( x_{n} , q ) \leq  d( x_0 , q )$$ for all $n \geq 0$.
Therefore the sequence $\{x_n\}$ is Fej$\acute{\mathrm{e}}$r monotone with respect to $F_0$.

Let $p$ be the unique point in $F$ such that $d(x_0,p) = d(x_0,F)$. Then $p \in F_0$. Also
we get
\begin{equation}\label{e11}
d( x_{n+1} , p ) \leq d( x_{n} , p ) \leq  d( x_0 , p ) < \pi/4
\end{equation}
for all $n \geq 0$. This means that the asymptotic radius $r(\{x_n \})$ of $\{x_n\}$ is less than $\pi/4$.

By Lemma \ref{l00},
we only need to show that for each point $x$ such that there exists a subsequence $\{ x_{n_k} \}$ of $\{ x_n \}$ which $\Delta$-converges to $x$, it belongs to $F_0$. From \eqref{e11}, note that $r(p, \{x_n\}) \leq d(x_0,p) < \pi/4$.

By Lemma \ref{l000}, we obtain
$$d(x,x_0) \leq d(x,p) + d(x_0,p) \leq \liminf_{ k } d(x_{n_k},p) + d(x_0,p)  < \pi/2,$$
that is,
\begin{equation}\label{e12}
x \in B_{\pi/2}(x_0).
\end{equation}
By Lemma \ref{l4}, we have
\begin{equation*}
\begin{split}
 \limsup_{k} d( T(x) , x_{n_k}) &\leq \limsup_{k} d( T(x) , T (x_{n_k})) + \limsup_{k} d( T (x_{n_k}) , x_{n_k})  \\
 &\leq \limsup_{k} d( x ,  x_{n_k}).
\end{split}
\end{equation*}
This implies that $T(x) \in A(\{x_{n_k}\})$ and $T(x)=x$. Therefore $x \in F$. With \eqref{e12}, $x$ belongs to $F_0$. By Lemma \ref{l00}, it is proved.
\end{proof}
Since the following corollary is a direct result by letting $K=0$ in Theorem \ref{t1}, Theorem \ref{t1} is the extended result of the corresponding one in \cite{pl}.
\begin{cor}
Let $X$ be a complete \catzero\ space and let $T: X \to X$ be a nonexpansive mapping such that $F:=\mathrm{Fix}(T) \neq \emptyset$.
Suppose that $\{x_n\}$ is defined by \eqref{e3} under the conditions $$\sum^{\infty}_{n=0} t_n(1-t_n) = \infty
\;\; \textit{and} \;\;
\sum^{\infty}_{n=0} t_n(1-t_n) s_n < \infty.$$ Then, for each $x_0 \in X$, the sequence $\{x_n\}$ $\Delta$-converges to a point of $F$.
\end{cor}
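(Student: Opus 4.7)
The proof plan is to invoke Theorem \ref{t1} with $K=0$. Since every \catzero\ space is by definition a \catk\ space for $K=0$, all hypotheses of Theorem \ref{t1} concerning $X$, $T$, $F$, and the sequences $\{t_n\}, \{s_n\}$ are already present in the corollary's statement. What remains is to verify that the distance restriction $d(x_0, F) < D_K/4$ imposed in Theorem \ref{t1} is vacuous in the flat case, so that the conclusion applies to an arbitrary $x_0 \in X$ as asserted here.

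By the convention fixed in Section~2, $D_K = \pi/\sqrt{K}$ when $K > 0$ and $D_K = \infty$ when $K \leq 0$. In particular $D_0 = \infty$, so the inequality $d(x_0, F) < D_0/4 = \infty$ holds trivially for every $x_0 \in X$. Applying Theorem \ref{t1} with $K = 0$ therefore yields, for each starting point $x_0$, a point of $F$ to which the Ishikawa iteration $\{x_n\}$ $\Delta$-converges. I anticipate no substantive obstacle: the real work --- setting up Fej\'er monotonicity of $\{x_n\}$ with respect to the appropriate subset of $F$, producing $\lim_n d(T(x_n), x_n) = 0$ via Lemma \ref{l4}, and identifying every $\Delta$-cluster point of $\{x_n\}$ as a fixed point of $T$ --- has already been carried out inside the proof of Theorem \ref{t1}, and this corollary is simply its unrestricted flat specialization, recovering and extending the result of \cite{pl}.
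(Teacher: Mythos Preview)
Your proposal is correct and matches the paper's own treatment: the corollary is stated there as a direct consequence of Theorem~\ref{t1} with $K=0$, and your observation that $D_0=\infty$ renders the condition $d(x_0,F)<D_K/4$ vacuous is exactly the point.
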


For the conditions $\sum^{\infty}_{n=0} t_n(1-t_n) = \infty$, $\sum^{\infty}_{n=0} (1-t_n) s_n < \infty$ and $\limsup_{n} s_n < 1$, we will get the following lemma, which is an analog of Lemma 2.12 in \cite{dp}.
\begin{lemma}\label{l7}
Let $X$ be a complete \catone\ space and let $T: X \to X$ be a nonexpansive mapping such that $F:=\mathrm{Fix}(T) \neq \emptyset$. Suppose that $\{t_n \}$ and $\{s_n \}$ satisfy that
$$\sum^{\infty}_{n=0} t_n(1-t_n) = \infty, \sum^{\infty}_{n=0} (1-t_n) s_n < \infty \;\; \textit{and} \;\; \limsup_{n} s_n < 1.$$
If $\{x_n \}$ is defined by \eqref{e3} for $x_0 \in X$ such that $d(x_0,F) < \pi/4$, then $$\lim_{ n \to \infty} d(T (x_n) , x_n ) = 0.$$
\end{lemma}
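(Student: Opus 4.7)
The plan is to imitate the \catzero\ argument of Dhompongsa and Panyanak in \cite{dp}, substituting Lemma~\ref{l0} for the parallelogram identity and using Lemma~\ref{l2} together with Lemma~\ref{l1} to upgrade a subsequential $\liminf$ to a full limit. Let $p$ be the unique closest point to $x_0$ in $F$. Since $d(x_0,p)<\pi/4$, Lemma~\ref{l5} confines $x_n,y_n,T(x_n),T(y_n)$ to $\overline{B}_{d(x_0,p)}[p]$, so any two of them are at distance $<\pi/2$ and every one is within $\pi/4$ of $p$; all applications of Lemma~\ref{l0} below will use this ball.

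First I would apply Lemma~\ref{l0} to $x_{n+1}=t_n T(y_n)\oplus(1-t_n)x_n$ at base point $p$. Using nonexpansiveness and $d(p,y_n)\le d(p,x_n)$ from Lemma~\ref{l5} to estimate $d(p,T(y_n))\le d(p,x_n)$, this yields
\begin{equation*}
d^2(p,x_{n+1})\le d^2(p,x_n)-\tfrac{k}{2}\,t_n(1-t_n)\,d^2(T(y_n),x_n),
\end{equation*}
and telescoping gives $\sum_n t_n(1-t_n)\,d^2(T(y_n),x_n)<\infty$. Next, the triangle inequality together with $d(x_n,y_n)=s_n d(T(x_n),x_n)$ and nonexpansiveness of $T$ gives $d(T(y_n),x_n)\ge(1-s_n)\,d(T(x_n),x_n)$. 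The hypothesis $\limsup_n s_n<1$ supplies $\delta>0$ and $N$ with $(1-s_n)^2\ge\delta^2$ for $n\ge N$, so $\sum_{n\ge N}t_n(1-t_n)\,d^2(T(x_n),x_n)<\infty$; combined with $\sum_n t_n(1-t_n)=\infty$, this forces $\liminf_n d(T(x_n),x_n)=0$.

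To upgrade the $\liminf$ to a limit I would invoke Lemma~\ref{l2}, which gives
\begin{equation*}
d(T(x_{n+1}),x_{n+1})\le\Bigl(1+4\tfrac{C}{\sin C}\,t_n(1-t_n)s_n\Bigr)d(T(x_n),x_n)
\end{equation*}
with $C=2d(x_0,F)<\pi/2$. Because $t_n\le 1$ we have $t_n(1-t_n)s_n\le(1-t_n)s_n$, so the hypothesis $\sum(1-t_n)s_n<\infty$ makes the coefficient sequence summable, and Lemma~\ref{l1} guarantees that $\lim_n d(T(x_n),x_n)$ exists. Together with the $\liminf$-zero subsequence from the previous paragraph this forces the limit to be $0$.

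The main obstacle is the maneuver converting a bound on $d(T(y_n),x_n)$ into one on $d(T(x_n),x_n)$: this is precisely where the hypothesis $\limsup_n s_n<1$ must be used, and it is the substantive difference between the hypotheses of this lemma and the summability-based hypothesis of Lemma~\ref{l4}.
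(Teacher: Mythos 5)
Your proof is correct, and its first half coincides with the paper's: both telescope the Lemma~\ref{l0} inequality at the nearest point $p \in F$ to obtain $\sum_n t_n(1-t_n)\,d^2(T(y_n),x_n) < \infty$ (the paper's \eqref{e6}), then use $(1-s_n)\,d(T(x_n),x_n) \le d(T(y_n),x_n)$ together with $\limsup_n s_n < 1$ and $\sum_n t_n(1-t_n) = \infty$ to conclude $\liminf_n d(T(x_n),x_n) = 0$; you fold the $(1-s_n)$ factor into the summable series while the paper first extracts $\liminf_n d(T(y_n),x_n)=0$ and converts afterwards, which is an immaterial reordering. Where you genuinely diverge is in upgrading the $\liminf$ to a limit. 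The paper derives a fresh recursion $d(T(x_{n+1}),x_{n+1}) \le [1 + 2 s_n(1-t_n)]\,d(T(x_n),x_n)$ (its \eqref{e13}) through a substantial chain of estimates using convexity of small balls, and only then invokes Lemma~\ref{l1}. You instead reuse Lemma~\ref{l2} verbatim and observe that $t_n(1-t_n)s_n \le (1-t_n)s_n$, so the hypothesis $\sum_n (1-t_n)s_n < \infty$ already makes the coefficients summable. This is legitimate: Lemma~\ref{l2} imposes no conditions on $\{t_n\}$, $\{s_n\}$ beyond $d(x_0,F) \le \pi/4$, and the degenerate case $d(x_0,F)=0$ is dispatched before Lemma~\ref{l2} in the paper, so $C = 2d(x_0,F) \in (0,\pi/2)$ and $C/\sin C$ is finite. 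The trade-off is minor --- both recursions have the form $1 + c\, s_n(1-t_n)$ with $c$ bounded and feed into Lemma~\ref{l1} identically --- but your route is shorter because it recycles work already done for Lemma~\ref{l4}, whereas the paper's self-contained derivation of \eqref{e13} avoids any dependence on the sine comparison of Lemma~\ref{l} and yields the cleaner constant $2$.
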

\begin{proof}
From Equation \eqref{e6} in Lemma \ref{l4}, $${ k \over 2}  \sum^{\infty}_{n = 0} t_n(1-t_n) d^2 (T (y_n) , x_n) \leq d^2(p,x_0) < \infty$$ for the unique point $p$ in $F$ such that $d(x_0,p) = d(x_0,F)$. Since $\sum^{\infty}_{n=0} t_n(1-t_n)$ diverges, it means that $$\liminf_n d^2 (T (y_n) , x_n) = 0$$ and then
\begin{equation}\label{e9}
\liminf_n d (T (y_n) , x_n) = 0.
\end{equation}
Since $T$ is nonexpansive and $d( x_n, y_n )=s_n d( T(x_n) , x_n )$,
\begin{equation*}
\begin{split}
d( T(x_n) ,x_n) &\leq d( T(x_n),T(y_n) ) + d( T(y_n) , x_n) \\
& \leq d( x_n, y_n ) + d( T(y_n) , x_n) \\
& = s_n d( T(x_n) , x_n ) + d( T(y_n) , x_n).
\end{split}
\end{equation*}
Then we get $$d( T(x_n) ,x_n ) \leq { 1 \over 1 - s_n} d ( T(y_n) , x_n).$$
By \eqref{e9},
\begin{equation}\label{e10}
\liminf_n d (T (x_n) , x_n) = 0.
\end{equation}

Since $d(x_0,F) < \pi/4$, by Lemma \ref{l5}, $x_n$ and $T(y_n)$ are in the open ball centered at $T(x_{n+1})$ with radius $< \pi/2$.
Since this open ball is convex, we have
\begin{equation*}
\begin{split}
d( T(x_{n+1}) , x_{n+1} ) &\leq t_n d( T(x_{n+1}) , T(y_n) ) + (1-t_n) d( T(x_{n+1}) , x_n ) \\
&\leq t_n d( x_{n+1} , y_n ) + (1-t_n) [d( T(x_{n+1}) , x_{n+1} ) + d( x_{n+1} , x_n ) ] \\
&\leq t_n d( x_{n+1} , y_n ) + (1-t_n) [d( T(x_{n+1}) , x_{n+1} ) + t_n d( T(y_n) , x_n ) ]. \\
\end{split}
\end{equation*}
Dividing by $t_n$, this becomes
$$ d( T(x_{n+1}) , x_{n+1} ) \leq d( x_{n+1} , y_n ) + (1-t_n) d( T(y_n) , x_n ).$$
By Lemma \ref{l5}, $T(y_n)$ and $x_n$ are in the open ball centered at $y_n$ with radius $< \pi/2$, which is convex.
This yields
\begin{equation*}
\begin{split}
  d( T(x_{n+1}) , x_{n+1} ) &\leq t_n d( T(y_n) , y_n ) + (1-t_n) d(x_n,y_n) + (1-t_n) d( T(y_n) , x_n ).  \\
\end{split}
\end{equation*}
Since $T(x_n)$ and $x_n$ are in the open ball centered at $T(y_n)$ with radius $< \pi/2$, we have
\begin{equation*}
\begin{split}
 d( T(x_{n+1}) , x_{n+1} ) &\leq t_n [ s_n d( T(y_n) , T(x_n)  ) + ( 1- s_n) d( T(y_n) , x_n)  ] \\ &+ (1-t_n) d(x_n,y_n) + (1-t_n) d( T(y_n) , x_n ).  \\
\end{split}
\end{equation*}
Since $T$ is nonexpansive and $d( x_n, y_n )=s_n d( T(x_n) , x_n )$, we get
\begin{equation*}
\begin{split}
 d( T(x_{n+1}) , x_{n+1} ) &\leq (1-t_n + t_n s_n ) d(x_n,y_n) + (1 - t_n s_n) d( T(y_n) , x_n ) \\
 &\leq s_n (1-t_n + t_n s_n ) d(x_n, T(x_n) ) \\
 &+ (1 - t_n s_n) [ d( T(y_n) , T(x_n)  ) + d( T(x_n) , x_n ) ]\\
 &\leq [s_n (1-t_n + t_n s_n )  + (1 - t_n s_n)(1 + s_n)    ] d(x_n, T(x_n) ) \\
 &= [ 1 + 2 s_n ( 1 - t_n) ] d(x_n, T(x_n) ).
\end{split}
\end{equation*}
Then we get the following inequality
\begin{equation}\label{e13}
d( T(x_{n+1}) , x_{n+1} ) \leq [ 1 + 2 s_n ( 1 - t_n) ] d(x_n, T(x_n) ).
\end{equation}
Since $\sum s_n ( 1 - t_n)$ converges, applying Lemma \ref{l1} to \eqref{e13}, $\lim_{ n \to \infty} d(T (x_n) , x_n ) $ exists. By \eqref{e10}, it is equal to zero.
\end{proof}
By following the same proof of Theorem \ref{t1} and using Lemma \ref{l7}, we obtain
\begin{theorem}\label{t2}
Let $X$ be a complete \catk\ space and let $T: X \to X$ be a nonexpansive mapping such that $F:=\mathrm{Fix}(T) \neq \emptyset$. Suppose that $\{x_n\}$ is defined by \eqref{e3} under the conditions
$$\sum^{\infty}_{n=0} t_n(1-t_n) = \infty, \sum^{\infty}_{n=0} (1-t_n) s_n < \infty \;\; \textit{and} \;\; \limsup_{n} s_n < 1.$$
Then, for each $x_0 \in X$ with $d(x_0, F ) < D_K /4 $, the sequence $\{x_n\}$ $\Delta$-converges to a point of $F$.
\end{theorem}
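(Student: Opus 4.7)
The plan is to mimic the proof of Theorem~\ref{t1} essentially verbatim, substituting Lemma~\ref{l7} for Lemma~\ref{l4} to handle the different set of summability hypotheses on $\{t_n\}$ and $\{s_n\}$. First I would rescale the metric by $1/\sqrt{K}$ to reduce to the case $K=1$, so that $D_K = \pi$ and the hypothesis $d(x_0,F) < D_K/4$ becomes $d(x_0,F) < \pi/4$, which is exactly the standing assumption required by Lemma~\ref{l7}.

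Next I would set $F_0 := F \cap B_{\pi/2}(x_0)$ and establish that $\{x_n\}$ is Fej\'er monotone with respect to $F_0$. For any $q \in F_0$, the open ball $B_r(q)$ with $r < \pi/2$ is convex by Lemma~\ref{l456}(2); combining this with the nonexpansiveness of $T$ gives $d(y_n,q) \leq d(x_n,q)$ and then $d(x_{n+1},q) \leq d(x_n,q)$, exactly as in the proof of Theorem~\ref{t1}. Letting $p$ be the unique point of $F$ realizing $d(x_0,F)$ (provided by Lemmas~\ref{l123} and \ref{l3} applied to the closed $\pi$-convex set $F$), one has $p \in F_0$ and $d(x_n,p) \leq d(x_0,p) < \pi/4$, so in particular $r(\{x_n\}) \leq \pi/4 < D_K/2$, putting us in position to apply Lemma~\ref{l00}.

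To conclude via Lemma~\ref{l00} it remains only to check that every $\Delta$-cluster point $x$ of $\{x_n\}$ lies in $F_0$. Given a subsequence $\{x_{n_k}\}$ that $\Delta$-converges to $x$, Lemma~\ref{l000} yields $d(x,p) \leq \liminf_k d(x_{n_k},p)$, so by the triangle inequality $d(x,x_0) \leq d(x,p) + d(x_0,p) < \pi/2$, i.e., $x \in B_{\pi/2}(x_0)$. Lemma~\ref{l7} now supplies $\lim_n d(T(x_n),x_n) = 0$; combining this with nonexpansiveness of $T$ gives
$$\limsup_k d(T(x),x_{n_k}) \leq \limsup_k d(T(x),T(x_{n_k})) + \limsup_k d(T(x_{n_k}),x_{n_k}) \leq \limsup_k d(x,x_{n_k}),$$
so $T(x)$ lies in the asymptotic center $A(\{x_{n_k}\})$. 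Since $x$ is the unique asymptotic center of $\{x_{n_k}\}$, this forces $T(x) = x$, hence $x \in F \cap B_{\pi/2}(x_0) = F_0$.

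The only genuine obstacle is invoking Lemma~\ref{l7}, which demands the strict inequality $d(x_0,F) < \pi/4$; the hypothesis $d(x_0,F) < D_K/4$ together with the initial rescaling delivers exactly this. Everything else is a faithful repetition of the Theorem~\ref{t1} argument, since the role of Lemma~\ref{l4} there is only to guarantee $d(T(x_n),x_n) \to 0$, and Lemma~\ref{l7} now plays that role under the alternative summability hypotheses.
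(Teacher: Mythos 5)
Your proposal is correct and matches the paper exactly: the paper's entire proof of Theorem~\ref{t2} is the one-line remark that it follows by repeating the proof of Theorem~\ref{t1} with Lemma~\ref{l7} in place of Lemma~\ref{l4}, which is precisely the substitution you carry out (and you correctly note that the strict inequality $d(x_0,F)<\pi/4$ after rescaling is what Lemma~\ref{l7} requires).
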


\section*{Acknowledgements}
\small The author would like to express his gratitude to Prof. R. Ghrist for his support.
He gratefully acknowledge support from the ONR Antidote MURI project, grant no. N00014-09-1-1031.

\bibliography{thesisbib}

\end{document}